\theoremstyle{plain}
\newtheorem{theorem}{\bf Теорема} %[section]
\newtheorem{lemma}{\bf Лемма} %[theorem]
\newtheorem{pro}{\bf Предложение}
\newtheorem{prob}{\bf Задача}
\theoremstyle{definition}
\theoremstyle{plain}
\newtoks\thehProclaim
\newtheorem*{Proclaim}{\the\thehProclaim}
\theoremstyle{definition}
\newtoks{\thehRemark}
\newtheorem*{Remark}{\the\thehRemark}
\DeclareRobustCommand{\No}{%
\ifmmode{\nfss@text{\textnumero}}\else\textnumero\fi}
\begin{document}

\sloppy

%\def\contentsname{Table of Contents}
%\def\figurename{Figure}
%\def\partname{Part}
%\def\refname{References}
%\def\bibname{References}
%\def\tablename{Table}
%\def\chaptername{Chapter}
%\def\proofname{Proof}
%\def\appendixname{Appendix}
%\def\keywordsname{Keywords}

 %%%%%%%%%%%%%%%%%%%%%%%%%%%%%%%%%%%%%%%%%%%%%%%%%%%%%%%%%%%%%
\large
УДК 517.53+517.95\\
\begin{center}
\textbf{МЕТОД ВОЗМУЩЕНИЙ ДЛЯ СИЛЬНО ЭЛЛИПТИЧЕСКИХ СИСТЕМ ВТОРОГО ПОРЯДКА\\ С ПОСТОЯННЫМИ КОЭФФИЦИЕНТАМИ\\}
\bigskip
\textbf{А.О. БАГАПШ\footnote{Работа выполнена при поддержке Российского научного фонда, проект 22-11-00071.}}
\end{center}

\normalsize
\bigskip

\noindent\textbf{Аннотация.} Рассмотрена классическая постановка задачи Дирихле для сильно эллиптической системы второго порядка с постоянными
коэффициентами в жордановых областях на плоскости. Показано, что решение задачи представляется в виде функционального ряда по степеням параметра,
определяющего отклонение оператора системы от лапласиана. Этот ряд сходится равномерно в замыкании области в предположении, что граница области и
заданная на ней граничная функция удовлетворяют достаточным условиям регулярности: композиция следа конформного отображения области на круг и
граничной функции принадлежит классу Гельдера с показателем больше, чем 1/2.

\noindent\textbf{Abstract.} The classical Dirichlet problem for a second-order strongly elliptic system with constant coefficients in a Jordan domain is considered.  We show that the solution of the problem can be represented as a functional series in powers of the parameter,
which determines the deviation of the system operator from the Laplacian. This series converges uniformly in the closure of the region under the assumption that the boundary of the region and the boundary function satisfy the sufficient regularity conditions: the trace of a conformal mapping of the domain onto a circle composed with the boundary function belongs to the Holder class with exponent greater than 1/2.

\noindent\textbf{Ключевые слова:} сильно эллиптическая система, задача Дирихле, метод возмущений.

\noindent\textbf{Keywords:} strongly elliptic system, Dirichlet problem, perturbation method.

\noindent\textbf{Mathematical Subject Classification:} 30E25, 35J25.
\bigskip
\section{Введение}

В настоящей работе рассматривается система дифференциальных уравнений
\begin{equation}\label{el-sys}
\left(A\frac{\partial^2}{\partial x^2}+2B\frac{\partial^2}{\partial x\partial y}+C\frac{\partial^2}
{\partial y^2}\right)\left(\begin{matrix}u\\v\end{matrix}\right)=
\left(\begin{matrix}0\\0\end{matrix}\right)
\end{equation}
относительно вещественнозначных функций $u(x, y)$ и $v(x, y)$ вещественных переменных $x$ и $y$ с постоянными вещественными матрицами коэффициентов $A$, $B$, $C$ размера $2\times 2$. Изучаются системы такого вида, относящиеся к эллиптическому типу. Согласно определению
Петровского \cite{Petrovskiy-39}, это означает, что
\begin{equation*}
\det(A\xi^2+2B\xi\eta+C\eta^2)\ne 0\quad\text{при}\quad(\xi, \eta)\in\mathbb R^2\setminus(0, 0).
\end{equation*}

Введем комплекснозначную функцию $f=u+iv$ комплексного переменного $z=x+iy$ и оператор системы \eqref{el-sys}
\begin{equation*}
Lf=\left(A\frac{\partial^2}{\partial x^2}+2B\frac{\partial^2}{\partial x\partial y}+C\frac{\partial^2}
{\partial y^2}\right)\left(\begin{matrix}u\\v\end{matrix}\right).
\end{equation*}
Классическая постановка задачи Дирихле для такого оператора в жордановой области формулируется следующим образом.
\begin{prob}\label{dirprob}
Пусть $\varOmega$ --- жорданова область с границей $\varGamma$. Для заданной граничной функции $h\in C(\varGamma)$ найти такую функцию
$f\in C(\overline\varOmega)$, что $Lf=0$ в $\varOmega$ и $f|_\varGamma=h$.
\end{prob}

Изучение вопроса о разрешимости задачи Дирихле привело к выделению подкласса сильно эллиптических систем, которые были определены несколькими
способами; мы будем пользоваться определением из \cite{HuaLinWu-65}, согласно которому требуется
\begin{equation*}
\det(A\alpha+2\beta B+\gamma C)\ne 0\quad\text{при}\quad\beta^2-\alpha\gamma<0.
\end{equation*}
Оно эквивалентно хорошо известному определению Вишика \cite{Vishik-51}.

Вид системы \eqref{el-sys}, ее принадлежность к классу эллиптических или сильно эллиптических систем сохраняются при трех классах невырожденных
преобразований: 1) линейной замены переменных $(x, y)$; 2) линейной замены искомых функций $(u, v)$; 3) линейной комбинации уравнений системы.
При этом специально подобранная серия таких преобразований с последующим сложением первого из полученных уравнений со вторым, умноженным на мнимую
единицу $i$, позволяет привести любую эллиптическую систему \eqref{el-sys} к комплексному уравнению
\begin{equation}\label{canon}
(\partial\overline\partial+\tau\partial^2)g(z)+\sigma(\tau\partial\overline\partial+\partial^2)\overline{g(z)}=0
\end{equation}
относительно комплекснозначной функции $g$ комплексного переменного $z=x+iy$ со всего двумя параметрами $\tau\in[0,1)$ и
$\sigma\in[0,1)\cup(1, \infty]$ (см. \cite{HuaLinWu-85}, \cite{BagFed-17}). Здесь
\begin{equation*}
\partial=\frac{\partial}{\partial z}=\frac{1}{2}\left(\frac{\partial}{\partial x}-i\frac{\partial}{\partial y}\right),\qquad
\overline\partial=\frac{\partial}{\partial\overline z}=\frac{1}{2}\left(\frac{\partial}{\partial x}+i\frac{\partial}{\partial y}\right)
\end{equation*}
--- операторы Коши --- Римана в новой системе координат. При $\sigma=\infty$ считаем, что уравнение \eqref{canon} приобретает вид
\begin{equation*}
(\tau\partial\overline\partial+\partial^2)\overline{g(z)}=0.
\end{equation*}
В случае сильной эллиптичности будет $\sigma\in[0, 1)$.

Перечислим несколько хорошо известных частных случаев уравнения \eqref{canon}. При $\tau=\sigma=0$ имеем комплексное уравнение Лапласа
$\Delta g(z)=4\partial\overline\partial g(z)=0$, а при $\tau=0$, $\sigma=\infty$ --- уравнение Бицадзе \cite{Bitsadze-48}
$\overline\partial^2 g(z)=0$. Если $\tau=0$, то возникает плоское изотропное уравнение Ламе теории упругости, записанное в комплексном виде $\partial\overline\partial g(z)+\sigma\partial^2\overline{f(z)}=0$, причем параметр $\sigma$ связан с коэффициентом Пуассона $p$ соотношением $\sigma=1/(3-4p)$. Поскольку, как известно \cite{LanLif-87}, $p\in(0, 1/2)$, а значит, $\sigma\in(1/3, 1)$, то соответствующая система \eqref{canon} сильно эллиптическая. Если же $\sigma=0$, то получаем систему, называемую кососимметрической, которая может быть записана в виде уравнения $ag_{xx}+2bg_{xy}+cg_{yy}=0$  с комплексными коэффициентами $a$, $b$, $c$.

Уравнение \eqref{canon} представляет собой возмущенное по двум параметрам $\tau$ и $\sigma$ уравнение Лапласа, причем в случае сильной
эллиптичности эти параметры относительно малы: $\tau, \sigma\in[0, 1)$. Чтобы подчеркнуть данное обстоятельство, сделаем еще одно, последнее
преобразование над уравнением \eqref{canon}, отделив лапласиан от остальной его части:
\begin{equation}\label{canon-2}
\partial\overline\partial(\mathcal T_{1, \sigma\tau}g)+\partial^2(\mathcal T_{\tau, \sigma}g)=0,
\end{equation}
где используется оператор аффинного преобразования
\begin{equation}\label{T-oper}
\mathcal T_{\alpha,\beta}:=\alpha\mathcal I+\beta\mathcal C,
\end{equation}
с (вообще говоря) комплексными параметрами $\alpha$ и $\beta$, выражающийся через тождественный оператор $\mathcal I\colon w\to w$  и оператор
комплексного сопряжения $\mathcal C\colon w\to\overline w$.  Если $|\alpha|\ne|\beta|$, то существует обратный оператор
$$
\mathcal T_{\alpha, \beta}^{-1}=\frac{1}{|\alpha|^2-|\beta|^2}\mathcal T_{\overline\alpha,-\beta}.
$$
Норма оператора \eqref{T-oper} как отображения $\mathbb C\to\mathbb C$ равна $\|\mathcal T_{\alpha, \beta}\|=|\alpha|+|\beta|$.

Считая уравнение \eqref{canon-2} сильно эллиптическим, заменим в нем искомую функцию $g$ на $f=\mathcal T_{1, \sigma\tau}g$ (невырожденным в этом случае преобразованием) и перепишем \eqref{canon-2} в виде
\begin{equation}\label{el-sys-pert}
\mathcal Lf:=\partial\overline\partial f+\partial^2(Tf)=0,
\end{equation}
где
$$
T=\mathcal T_{\tau, \sigma}\mathcal T_{1, \sigma\tau}^{-1}=\frac{\tau(1-\sigma^2)\mathcal I+\sigma(1-\tau^2)\mathcal C}{1-\sigma^2\tau^2}.
$$
Полученное уравнение \eqref{el-sys-pert} представляет собой уравнение Лапласа, возмущенное по оператору $T$ с нормой
\begin{equation*}
\|T\|=\frac{\tau+\sigma}{1+\sigma\tau},
\end{equation*}
которая в рассматриваемом сильно эллиптическом случае, когда $\tau, \sigma\in[0, 1)$, оказывается меньше единицы. Введем нормированный на единицу
оператор
\begin{equation*}
T_0=\|T\|^{-1}T=\mathcal T_{\alpha_0, \beta_0},
\end{equation*}
где
\begin{equation*}
\alpha_0=\frac{\tau(1-\sigma^2)}{(\tau+\sigma)(1-\sigma\tau)},\qquad\beta_0=\frac{\sigma(1-\tau^2)}{(\tau+\sigma)(1-\sigma\tau)},
\end{equation*}
и перепишем с его помощью \eqref{el-sys-pert} в окончательном виде
\begin{equation}\label{el-syst-pert-2}
\mathcal Lf=\partial\overline\partial f+\|T\|\partial^2(T_0f)=0.
\end{equation}
В уравнении \eqref{el-syst-pert-2} малым параметром является $\|T\|<1$.

\section{Метод возмущений}

Для решения задачи Дирихле применим метод возмущения по величине $\|T\|$, который состоит в поиске решения $f$ в виде ряда
\begin{equation}\label{series}
f=\sum\limits_{n=0}^\infty f_n\|T\|^n,
\end{equation}
в котором функции $f_n$ находятся с помощью подстановки разложения \eqref{series} в уравнение $\mathcal Lf=0$ и приравнивания к нулю множителей
при одинаковых степенях величины $\|T\|$; при этом полагаем $f_0|_\varGamma=h$ и $f_n|_\varGamma=0$ для $n\ge 1$.

Таким образом, получаем следующие краевые задачи для последовательного отыскания функций $f_n$:
\begin{equation}\label{f0-prob}
\partial\overline\partial f_0=0\quad\text{в}\quad\varOmega,\qquad\qquad f_0|_\varGamma=h
\end{equation}
и
\begin{equation}\label{fn-prob}
\partial\overline\partial f_n=-\partial^2(T_0 f_{n-1})\quad\text{в}\quad\varOmega,\qquad\qquad f_n|_\varGamma=0
\end{equation}
для $n\geqslant 1$.

Пусть $\omega\colon\mathbb D\to\varOmega$ --- некоторое конформное отображение единичного круга $\mathbb D:=\{z\in\mathbb C\colon|z|<1\}$ на область $\varOmega$. В случае жордановой области $\varOmega$ по теореме Каратеодори отображение $\omega$ продолжается до гомеоморфизма замкнутых
областей $\overline{\mathbb D}$ и $\overline\varOmega$ и т.е. $\omega\in C(\overline{\mathbb D})$. Для дальнейшего удобно перенести задачи
\eqref{f0-prob} и \eqref{fn-prob} в круг $\mathbb D$ с помощью введенного конформного отображения. Пусть
\begin{equation*}
F=f\circ\omega,\qquad H=h\circ\omega\qquad F_n=F_n\circ\omega.
\end{equation*}
Тогда
\begin{equation}\label{oper-change}
\mathcal Lf=\frac{1}{|\omega'|^2}\left[\partial\overline\partial F+\|T\|\partial\left(\frac{\overline{\omega'}}{\omega'}\partial(T_0 F)\right)\right]=:\mathcal MF.
\end{equation}
Из \eqref{series}, \eqref{f0-prob} и \eqref{fn-prob} следует, что
\begin{equation}\label{series-disk}
F=\sum\limits_{n=0}^\infty F_n\|T\|^n,
\end{equation}
где
\begin{equation}\label{F0-prob}
\partial\overline\partial F_0=0\quad\text{в}\quad\mathbb D,\qquad\qquad F_0|_{\mathbb T}=H
\end{equation}
и
\begin{equation}\label{Fn-prob}
\partial\overline\partial F_n=-\partial\left(\frac{\overline{\omega'}}{\omega'}\partial(T_0 F_{n-1})\right)\quad\text{в}\quad\varOmega,\qquad\qquad F_n|_{\mathbb T}=0
\end{equation}
для $n\geqslant 1$. В случае достаточной регулярности функции $F_{n-1}$ при фиксированном номере $n$ можно с помощью функции Грина
\begin{equation}\label{Green-func}
G(\zeta, z)=\frac{2}{\pi}\log\left|\frac{\zeta-z}{1-\zeta\overline z}\right|
\end{equation}
для оператора $\partial\overline\partial$ в круге $\mathbb D$ записать решения задач \eqref{F0-prob} и \eqref{Fn-prob}:
\begin{equation}\label{induct-disk}
F_0(z)=\frac{1}{2i}\int_{\mathbb T}\partial_\zeta G(\zeta, z)h(\zeta)d\zeta,\qquad
F_n(z)=-\int_{\mathbb D} G(\zeta, z)\partial\left(\frac{\overline{\omega'(\zeta)}}{\omega'(\zeta)}\partial(T_0 F_{n-1}(\zeta))\right)d\mu,
\end{equation}
$n\geqslant 1$, где $\mu$ --- мера Лебега.

Определим операторы
\begin{equation*}\label{ind-oper}
\mathcal P[\varphi(z)]:=\frac{1}{2i}\int_{\mathbb T}\partial_\zeta G(\zeta, z)\varphi(\zeta)d\zeta,\qquad
\mathcal K[\varphi(z)]:=\int_{\mathbb D}\partial_\zeta G(\zeta, z)\varphi(\zeta)d\mu
\end{equation*}
и
\begin{equation*}\label{der-oper}
\mathcal K_\partial[\varphi(z)]:=\text{p.v.}\int_{\mathbb D}\partial_z\partial_\zeta G(\zeta, z)\varphi(\zeta)d\mu,\qquad
\mathcal K_{\overline\partial}[\varphi(z)]:=\text{p.v.}\int_{\mathbb D}\partial_{\overline z}\partial_\zeta G(\zeta, z)\varphi(\zeta)d\mu,
\end{equation*}
из которых $\mathcal P$ задается на классе функций $C(\mathbb T)$, а остальные на $L_p(\mathbb D)$, причем последние два интеграла понимаются в
смысле главного значения. В дальнейшем обозначение $\text{v.p.}$ будем для краткости опускать. Из формулы \eqref{Green-func} получаем
\begin{equation}\label{ind-oper-2}
\begin{aligned}
\mathcal P[\varphi(z)]=\frac{1}{2\pi}\int_{\mathbb T}\left(\frac{1}{\zeta-z}+\frac{\overline z}{1-\zeta\overline z}\right)\varphi(\zeta)d\zeta,\\
\mathcal K[\varphi(z)]=\frac{1}{\pi}\int_{\mathbb D}\left(\frac{1}{\zeta-z}+\frac{\overline z}{1-\zeta\overline z}\right)\varphi(\zeta)d\mu
\end{aligned}
\end{equation}
и
\begin{equation}\label{der-oper-2}
\mathcal K_\partial[\varphi(z)]=\frac{1}{\pi}\int_{\mathbb D}\frac{\varphi(\zeta)d\mu}{(\zeta-z)^2},\qquad
\mathcal K_{\overline\partial}[\varphi(z)]=\frac{1}{\pi}\int_{\mathbb D}\frac{\varphi(\zeta)d\mu}{(1-\zeta\overline z)^2}-\varphi(z).
\end{equation}
С помощью введенных операторов формулы \eqref{induct-disk} для построения функций $F_n$ можно записать в виде
\begin{equation}\label{induct-oper}
F_0=\mathcal P[h],\qquad F_n=\mathcal K[(\overline{\omega'}/\omega')\partial(T_0 F_{n-1})],\quad n\geqslant 1.
\end{equation}
При этом
\begin{equation*}
\partial F_n=\mathcal K_\partial[(\overline{\omega'}/\omega')\partial(T_0 F_{n-1})],\qquad
\overline\partial F_n=\mathcal K_{\overline\partial}[(\overline{\omega'}/\omega')\partial(T_0 F_{n-1})],\quad n\geqslant 1.
\end{equation*}

Введем также обозначения для частичных сумм рядов \eqref{series} и \eqref{series-disk} соответственно:
\begin{equation}\label{part-sum}
s_m=\sum_{n=0}^m f_n\|T\|^n,\qquad S_m=\sum_{n=0}^m F_n\|T\|^n.
\end{equation}

В настоящей работе доказывается следующая теорема сходимости.

\begin{theorem}\label{dirprob-solv}
Пусть жорданова область $\varOmega$ и заданная на ее границе $\varGamma$ функция $h$ таковы, что $h\circ\omega\in C^\alpha(\mathbb T)$ при $1/2<\alpha<1$, где $\omega$ --- некоторое конформное отображение единичного круга $\mathbb D$ на $\varOmega$. Тогда при любом значении $\|T||\in[0, 1)$ ряд \eqref{series} с функциями $f_n=F_n\circ\omega^{-1}$, где $F_n$ заданы согласно \eqref{induct-oper}, сходится в норме пространства $C(\overline\varOmega)$ к функции $f\in C(\overline\varOmega)$, удовлетворяющей в $\varOmega$ уравнению $\mathcal Lf=0$ и совпадающей на $\varGamma$ с $h$.
\end{theorem}

Условие $h\circ\omega\in C^\alpha(\mathbb T)$, $\alpha\in(1/2, 1)$, выполняется, например, при $h\in C^\beta(\Gamma)$ и
$\omega\in C^\gamma(\mathbb T)$, где $\beta\gamma=\alpha\in(1/2, 1)$. Действительно, в этом случае
$$
|h\circ\omega(z_1)-h\circ\omega(z_2)|\leqslant[h]_\alpha|\omega(z_1)-\omega(z_2)|^\beta\leqslant[h]_\beta[\omega|_{\mathbb T}]_\gamma^\beta
|z_1-z_2|^{\beta\gamma},
$$
где $[\varphi]_\alpha:=\sup_{\zeta_1\ne\zeta_2}|\varphi(\zeta_1)-\varphi(\zeta_2)|/|\zeta_1-\zeta_2|^\alpha$.

Теорема \ref{dirprob-solv} является распространением аналогичного результата, полученного в работе автора \cite{Bagapsh-2023-CVEE} для
кососимметрической сильно эллиптической системы, являющейся частным случаем рассматриваемой здесь системы, отвечающим значению параметра
$\sigma=0$.

Отметим, что не все рассматриваемые здесь сильно эллиптические системы \eqref{canon} обладают функционалом энергии, с помощью которого возможна
вариационная переформулировка задачи Дирихле, стоящая за доказательством теоремы Лебега  об общей разрешимости задачи Дирихле для уравнения Лапласа в односвязной области, см. \cite{Lebesgue-07}. Система канонического вида \eqref{canon} обладает функционалом энергии в виде интеграла по
области от квадратичной формы первых производных только при соотношении параметров $\sigma>\tau$; такие системы называются симметризуемыми, см.
\cite{BagFed-2022}. Это обстоятельство является причиной того, что вопрос о разрешимости задачи Дирихле для общих сильно эллиптических систем вида \eqref{el-sys} в односвязных или хотя бы жордановых областях с произвольными непрерывными граничными данными является открытым.

В настоящее время наибольшим продвижением в вопросе о разрешимости задачи \ref{dirprob} является результат Веркоты и Фогеля
\cite{VerVog-97}, устанавливающий общую разрешимость задачи Дирихле в областях с кусочно гладкими липшицевыми границами при произвольных непрерывных граничных данных. В доказываемой здесь теореме \ref{dirprob-solv} граничные функции берутся из более узкого класса Гельдера,
однако область может принадлежать более широкому классу по сравнению с \cite{VerVog-97}.

%Отметим, что условие $h\circ\omega\in C^\alpha(\mathbb T)$ выполняется, например, в случае, когда $h\in C^\alpha(\Gamma)$, а $\Gamma$ ---
%Дини-гладкая кривая. Действительно, по теореме Линделёфа, $\omega\in C^1(\mathbb T)$, так что
%$$
%|h\circ\omega(z_1)-h\circ\omega(z_2)|\leqslant[h]_\alpha\,|\omega(z_1)-\omega(z_2)|^\alpha\leqslant[h]_\alpha\,\|\omega'\|_{C(\overline{\mathbb
%D})}^\alpha
%\,|z_1-z_2|^\alpha,
%$$
%где $[h]_\alpha=\sup_{\zeta_1\ne\zeta_2}|h(\zeta_1)-h(\zeta_2)|/|\zeta_1-\zeta_2|^\alpha$.

%Отметим, что из \eqref{induct} вытекают расчетные формулы
%$$
%\begin{aligned}
%&S_0(z)=-\frac{1}{2i}\int_\varGamma\partial_\zeta G(\zeta, z)h(\zeta)d\zeta,\\
%&S_N(z)=-\frac{1}{2i}\int_\varGamma\partial_\zeta G(\zeta, z)h(\zeta)d(\zeta+\tau\overline\zeta)-\tau\int_\varOmega\partial^2_\zeta G(\zeta, z)S_{N-1}(\zeta)d\mu
%\end{aligned}
%$$

%для частичной суммы
%\begin{equation}\label{SN}
%S_N(z):=\sum_{n=0}^N f_n(z)\tau^n
%\end{equation}
%ряда \eqref{series}, задающие итерационный процесс приближенного вычисления искомого решения $f$, который сходится с экспоненциальной скоростью.

\section{Доказательство сходимости метода возмущений}

\begin{lemma}\label{Pois-Wp-lem}
Если $\varphi\in C^\alpha(\mathbb T)$, где $1/2<\alpha<1$, то $\mathcal P[\varphi]\in W^1_p(\mathbb D)$ с любым показателем $0<p<(2(1-\alpha))^{-1}$.
\end{lemma}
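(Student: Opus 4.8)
The plan is to identify $\mathcal P[\varphi]$ with the harmonic extension of $\varphi$ into $\mathbb D$ --- by construction it solves $\partial\overline\partial u=0$ in $\mathbb D$ with $u|_{\mathbb T}=\varphi$ --- to establish the sharp growth of its gradient near $\mathbb T$, and then to integrate that bound. Differentiating \eqref{ind-oper-2} and noting that the second (antiholomorphic) summand is annihilated by $\partial$ while the first (holomorphic) one is annihilated by $\overline\partial$, I obtain
\[
\partial\mathcal P[\varphi](z)=\frac{1}{2\pi}\int_{\mathbb T}\frac{\varphi(\zeta)}{(\zeta-z)^2}\,d\zeta,\qquad
\overline\partial\mathcal P[\varphi](z)=\frac{1}{2\pi}\int_{\mathbb T}\frac{\varphi(\zeta)}{(1-\zeta\overline z)^2}\,d\zeta,
\]
with kernels of the same form as those of $\mathcal K_\partial,\mathcal K_{\overline\partial}$ in \eqref{der-oper-2}.

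For $z\in\mathbb D$ both kernels have vanishing integral over $\mathbb T$ (the kernel $(1-\zeta\overline z)^{-2}$ is holomorphic in $\zeta$ on $\overline{\mathbb D}$, while $(\zeta-z)^{-2}$ has zero residue at its pole $\zeta=z$), so I may subtract the boundary value at the radial projection $z^\ast=z/|z|$ and write, e.g.,
\[
\partial\mathcal P[\varphi](z)=\frac{1}{2\pi}\int_{\mathbb T}\frac{\varphi(\zeta)-\varphi(z^\ast)}{(\zeta-z)^2}\,d\zeta .
\]
Since $|1-\zeta\overline z|=|\zeta-z|$ for $\zeta\in\mathbb T$, both derivatives are majorized by $[\varphi]_\alpha\int_{\mathbb T}|\zeta-z^\ast|^{\alpha}|\zeta-z|^{-2}\,|d\zeta|$. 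Splitting $\mathbb T$ into the arc where $|\zeta-z^\ast|\le 2(1-|z|)$ (there $|\zeta-z|\ge 1-|z|$ and the arc has length $\asymp 1-|z|$) and its complement (there $|\zeta-z|\asymp|\zeta-z^\ast|$, and $\int t^{\alpha-2}\,dt$ converges away from the origin) gives the Hardy--Littlewood bound
\[
|\nabla\mathcal P[\varphi](z)|\le C\,[\varphi]_\alpha\,(1-|z|)^{\alpha-1},\qquad z\in\mathbb D .
\]

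Integrating in polar coordinates,
\[
\int_{\mathbb D}|\nabla\mathcal P[\varphi]|^{p}\,d\mu\le C^{p}[\varphi]_\alpha^{p}\int_0^1(1-r)^{p(\alpha-1)}r\,dr,
\]
and the right-hand side is finite exactly when $p(1-\alpha)<1$; this contains the asserted range $0<p<(2(1-\alpha))^{-1}$ (in fact the argument is valid up to $p<(1-\alpha)^{-1}$). As $\varphi\in C(\mathbb T)$, its harmonic extension is bounded by the maximum principle, so $\mathcal P[\varphi]\in L_p(\mathbb D)$ on the bounded domain $\mathbb D$; and since $\mathcal P[\varphi]\in C^\infty(\mathbb D)\cap C(\overline{\mathbb D})$, its classical gradient coincides with the distributional one and lies in $L_p(\mathbb D)$ by the above. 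Hence $\mathcal P[\varphi]\in W^1_p(\mathbb D)$.

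The one substantial step is the pointwise gradient estimate, i.e. the singular-integral inequality $\int_{\mathbb T}|\zeta-z^\ast|^{\alpha}|\zeta-z|^{-2}\,|d\zeta|\le C(1-|z|)^{\alpha-1}$; it is the subtraction of $\varphi(z^\ast)$ --- legitimate only because the kernels integrate to zero --- that turns the non-integrable singularity $|\zeta-z|^{-2}$ into the controlled factor $(1-|z|)^{\alpha-1}$, and this is the point where the H\"older exponent is used. The hypothesis $\alpha>1/2$ is precisely what makes $(2(1-\alpha))^{-1}>1$, so that membership in $W^1_p$ is obtained for some $p>1$, the range required by the Sobolev-type estimates in the subsequent lemmas.
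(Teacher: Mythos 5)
Your argument is correct, and it takes a genuinely different route from the paper's. The paper decomposes $\mathcal P[\varphi]=\psi_1+\psi_2$ into a Cauchy-type integral and its antiholomorphic companion, invokes Privalov's theorem to place $\psi_1\in C^{2\alpha-1}(\overline{\mathbb D})$ --- this is exactly where the hypothesis $\alpha>1/2$ enters, keeping the exponent positive --- and then converts H\"older continuity into gradient growth via the Cauchy formula over circles $T(z,r)\subset\mathbb D$ with $r\to 1-|z|$, obtaining $|\partial\psi(z)|\leqslant c_\alpha(1-|z|)^{2(\alpha-1)}$ and hence precisely the stated range $2(1-\alpha)p<1$. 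You instead differentiate the kernels of \eqref{ind-oper-2} directly, use the vanishing of $\int_{\mathbb T}(\zeta-z)^{-2}\,d\zeta$ and $\int_{\mathbb T}(1-\zeta\overline z)^{-2}\,d\zeta$ to subtract $\varphi(z^\ast)$, and run the classical Hardy--Littlewood two-arc splitting on $\mathbb T$; every step checks out (the moment conditions, $|1-\zeta\overline z|=|\zeta-z|$ for $\zeta\in\mathbb T$, and the comparability $|\zeta-z|\asymp|\zeta-z^\ast|$ off the near arc are all correct). What each approach buys: the paper's proof leans on two known results (Privalov, and the H\"older-to-growth lemma from Duren/Pommerenke) at the cost of a degraded exponent, while yours is self-contained and sharper, giving $|\nabla\mathcal P[\varphi](z)|\leqslant C[\varphi]_\alpha(1-|z|)^{\alpha-1}$ and hence $W^1_p(\mathbb D)$ for all $p<(1-\alpha)^{-1}$, a range strictly containing the one asserted. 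This extra strength is not idle: Step 1 of the proof of Theorem \ref{dirprob-solv} needs some $p>2$ within the lemma's range, and $2<(2(1-\alpha))^{-1}$ holds only for $\alpha>3/4$, whereas your range admits $p>2$ exactly when $\alpha>1/2$, matching the theorem's hypothesis (the paper could reach the same sharp range, since Privalov's theorem in fact preserves the exponent $\alpha$ rather than lowering it to $2\alpha-1$, but as written it does not). One small correction to your closing remark: what the subsequent argument requires is not merely $p>1$ but $p>2$, because of the Sobolev embedding $W^1_p(\mathbb D)\subset C(\overline{\mathbb D})$ and the condition $\|\mathcal{DK}\|_p\cdot\|T\|<1$ with $\|\mathcal{DK}\|_p\to 1$ as $p\to 2$; with your sharp range this is harmless, but with the lemma's stated range it is a real constraint.
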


\begin{proof}
Пусть $\psi=\mathcal P\varphi$. Поскольку $\varphi\in C(\mathbb T)$, то по свойству интеграла Пуассона, $\psi\in C(\overline{\mathbb D})$, так что заведомо $\psi\in L_p(\mathbb D)$. Докажем $L_p$--интегрируемость первых производных. Представим функцию $\psi$ в виде суммы $\psi(z)=\psi_1(z)+\psi_2(z)$ голоморфных компонент
\begin{equation*}
\psi_1(z)=\frac{1}{2\pi i}\int_{\mathbb T}\frac{\varphi(\zeta)d\zeta}{\zeta-z},\qquad
\psi_2(z)=\frac{1}{2\pi i}\int_{\mathbb T}\frac{\varphi(\zeta)d\overline\zeta}{\overline\zeta-\overline z}-\frac{1}{2\pi}
\int_{\mathbb T}\varphi(\zeta)|d\zeta|.
\end{equation*}
В силу теоремы Привалова \cite{Privalov-39} для интеграла типа Коши, из принадлежности $\varphi\in C^\alpha(\mathbb T)$ при $1/2<\alpha<1$
следует, что $\psi_1\in C^{2\alpha-1}(\overline{\mathbb D})$. Обозначим $T(z, r):=\{\zeta\in\mathbb C\colon |\zeta-z|=r\}\subset\mathbb D$. Из
формулы Коши
$$
\psi_1(z)=\frac{1}{2\pi i}\int_{T(z,r)}\frac{\psi_1(\zeta)d\zeta}{\zeta-z}
$$
находим
\begin{equation*}
\partial\psi(z)=\psi_1'(z)=\frac{1}{2\pi i}\int_{T(z,r)}\frac{\psi_1(\zeta)d\zeta}{(\zeta-z)^2}=
\frac{1}{2\pi i}\int_{T(z,r)}\frac{\psi_1(\zeta)-\psi_1(z)}{(\zeta-z)^2}d\zeta,
\end{equation*}
откуда выводим оценку
$$
|\partial\psi(z)|\leqslant\frac{1}{2\pi i}\int_{T(z,r)}\frac{|\psi_1(\zeta)-\psi_1(z)|}{|\zeta-z|^2}|d\zeta|\leqslant
\frac{1}{2\pi i}\int_{T(z,r)}\frac{c_\alpha|\zeta-z|^{2\alpha-1}}{|\zeta-z|^2}|d\zeta|=\frac{c_\alpha}{r^{2(1-\alpha)}},
$$
где $c_\alpha=\sup_{\zeta\ne z}|\psi_1(\zeta)-\psi_1(z)|/|\zeta-z|^{2\alpha-1}$. Предельным переходом $r\to(1-|z|)$ получаем
$|\partial\psi(z)|\leqslant c_\alpha(1-|z|)^{2(\alpha-1)}$, см. также \cite[стр. 74]{Duren-70} или \cite[стр. 50]{Pommerenke-92}. Это означает, что
$\partial\psi\in L_p(\mathbb D)$, если $2(1-\alpha)p<1$. Аналогичным образом устанавливается $L_p$--интегрируемость производной $\overline\partial\psi=\psi_2'$ при том же условии на $p$. Лемма доказана.
\end{proof}

Рассмотрим оператор Берлинга
\begin{equation}\label{polar-oper}
\mathcal B\varphi(z):=\frac{1}{\pi}\int_{\mathbb C}\frac{\varphi(\zeta)d\mu}{(\zeta-z)^2},
\end{equation}
осуществляющий, по теореме Кальдерона --- Зигмунда \cite{CalZig-52}, ограниченное отображение пространства $L_p(\mathbb C)$ в себя при любом $p\in(1, \infty)$. Обозначим через $\|\mathcal B\|_p$ его норму как отображения $L_p(\mathbb C)\to L_p(\mathbb C)$, аналогичным образом будем обозначать нормы операторов, действующих в $L_p(U)$ для произвольной области $U$. Для дальнейшего является существенным то обстоятельство, что $\|\mathcal B\|_p\to 1$ при $p\to 2$, см. \cite[стр. 89]{Ahlfors-66}, \cite[стр. 5--6]{Christ-90}.

\begin{pro}\label{Lp-oper-pro}
Операторы \eqref{ind-oper-2}, \eqref{der-oper-2} обладают следующими свойствами:

\noindent (i) $\mathcal K\colon L_p(\mathbb D)\to L_p(\mathbb D)$ ограничен при $p>1$;

\noindent (ii) $\mathcal K_\partial\colon L_p(\mathbb D)\to L_p(\mathbb D)$ ограничен при $p>1$, причем $\|\mathcal K_\partial\|_p=\|\mathcal B\|_p\to 1$
при $p\to 2$;

\noindent (iii) $\mathcal K_{\overline\partial}\colon L_p(\mathbb D)\to L_p(\mathbb D)$ ограничен при $p>1$, причем
$\|\mathcal K_{\overline\partial}\|_p\to 1$ при $p\to 2$.
\end{pro}

\begin{proof}
(i) вытекает из того, что ядро интегрального оператора $\mathcal K$ состоит из суммы двух ядер со слабой особенностью.

(ii) Пусть $\varphi\in L_p(\mathbb D)$, $p>1$. Тогда $\mathcal K_\partial\,\varphi=\mathcal B\varphi_1$, где функция $\varphi_1$ совпадает с $\varphi$ в круге $\mathbb D$ и равна нулю вне $\mathbb D$, так что $\|\varphi_1\|_{L_p(\mathbb C)}=\|\varphi\|_{L_p(\mathbb D)}$. Следовательно,
$\|\mathcal K_\partial\|_p=\|\mathcal B\|_p$.

(iii) Устроим в интеграле для $\mathcal K_{\overline\partial}$ из формулы \eqref{der-oper-2} замену переменной $\zeta$ на $\xi=1/\overline\zeta$ и получим
\begin{equation}\label{iii-proof}
\mathcal K_{\overline\partial}[\varphi(z)]=\frac{1}{\pi}\int_{\mathbb C\setminus\overline{\mathbb D}}\frac{\varphi(1/\overline\xi)}{\xi^2}\cdot
\frac{d\mu}{(\overline\xi-\overline z)^2}-\varphi(z)=\overline{\mathcal B[\varphi_2(z)]}-\varphi(z),
\end{equation}
где $\varphi_2(z)=\overline{\varphi(1/\overline z)}/\overline z^2$ при $z\in\mathbb C\setminus\overline{\mathbb D}$ и $\varphi_2(z)=0$ при $z\in\mathbb D$.
Устраивая обратную замену $\zeta=1/\overline\xi$, находим
$$
\|\varphi_2\|_{L_p(\mathbb D)}=\left(\int_{\mathbb C\setminus\overline{\mathbb D}}|\varphi_2(\xi)|^p d\mu\right)^{\frac{1}{p}}=
\left(\int_{\mathbb D}|\xi|^{2p-4}\cdot|\varphi(\zeta)|^p d\mu\right)^{\frac{1}{p}}\leqslant\|\varphi\|_{L_p(\mathbb D)}
$$
при $p\geqslant 2$ с равенством при $p=2$. Тогда из \eqref{iii-proof} следует, что
$$
\|\mathcal K_{\overline\partial}\,\varphi\|_{L_p(\mathbb D)}\leqslant(\|\mathcal B\|_p+1)\|\varphi\|_{L_p(\mathbb D)},
$$
т.е. $\mathcal K_{\overline\partial}\colon L_p(\mathbb D)\to L_p(\mathbb D)$ ограничен при $p>1$.

Найдем $\|\mathcal K_{\overline\partial}\|_2$. Пусть сначала $\varphi$ --- произвольная пробная функция из класса $C^2_0(\mathbb D)$ дважды непрерывно дифференцируемых в $\mathbb C$ функций с компактным носителем $\text{supp}(\varphi)\subset D_r:=\{|z|<r\}$, где $r\in(0, 1)$.
Тогда $\mathcal K[\varphi(z)]\in C(\overline{\mathbb D})$, см. \cite[стр. 85]{Ahlfors-66}. Из \eqref{ind-oper-2} имеем
$$
\mathcal K[\varphi(z)]=\frac{1-|z|^2}{\pi}\int_{\text{supp}(\varphi)}\frac{\varphi(\zeta)d\mu(\zeta)}{(\zeta-z)(1-\zeta\overline z)},
$$
откуда с помощью неравенства Гельдера находим при $|z|>r$
\begin{equation}\label{K-vanish}
|\mathcal K[\varphi(z)]|\leqslant\frac{1-|z|^2}{\pi}\sqrt{\int_{D_r}\frac{|\varphi(\zeta)|^2d\mu(\zeta)}{|\zeta-z|^2}
\int_{D_r}\frac{|\varphi(\zeta)|^2d\mu(\zeta)}{|1-\zeta\overline z|^2}}\leqslant
\frac{(1-|z|^2)\|\varphi\|_{L_p(\mathbb D)}^2}{\pi(|z|-r)(1-r|z|)}\to 0
\end{equation}
при $|z|\to 1$. Поскольку $\varphi(z)=0$ и $\mathcal K[\varphi(z)]=0$ при $z\in\mathbb T$, то, применяя несколько раз интегрирование по частям, получаем
\begin{multline*}
\|\mathcal K_{\overline\partial}\,\varphi\|_{L_2(\mathbb D)}^2=\int_{\mathbb D}\frac{\partial}{\partial\overline z}\mathcal K[\varphi(z)]
\cdot\frac{\partial}{\partial z}\overline{\mathcal K[\varphi(z)]}d\mu(z)=-\int_{\mathbb D}\overline{\mathcal K[\varphi(z)]}
\frac{\partial^2}{\partial z\partial\overline z}\mathcal K[\varphi(z)]d\mu(z)=\\
=\int_{\mathbb D}\overline{\mathcal K[\varphi(z)]}\frac{\partial\varphi(z)}{\partial z}d\mu(z)=
-\int_{\mathbb D}\varphi(z)\frac{\partial}{\partial z}\overline{\mathcal K[\varphi(z)]}d\mu(z)=\\
=-\int_{\mathbb D}\varphi(z)\left(\frac{1}{\pi}\int_{\mathbb D}\frac{\overline{\varphi(\zeta)}d\mu(\zeta)}{(1-\overline\zeta z)^2}
-\overline{\varphi(z)}\right)d\mu(z)=\|\varphi\|_{L_2(\mathbb D)}^2-
\frac{1}{\pi}\int_{\mathbb D}\int_{\mathbb D}\frac{\varphi(z)\overline{\varphi(\zeta)}}{(1-\overline\zeta z)^2}d\mu(\zeta)d\mu(z).
\end{multline*}
Вычитаемая из $\|\varphi\|_{L_2(\mathbb D)}^2$ величина равна
\begin{multline*}
\frac{1}{\pi}\int_{\mathbb D}\int_{\mathbb D}\frac{\varphi(z)\overline{\varphi(\zeta)}}{(1-\overline\zeta z)^2}d\mu(\zeta)d\mu(z)=\\
=\sum\limits_{n=0}^\infty(n+1)\frac{1}{\pi}\int_{\mathbb D}\int_{\mathbb D}\varphi(z)\overline{\varphi(\zeta)}z^n\overline\zeta^n
d\mu(\zeta)d\mu(z)=\\
=\sum\limits_{n=0}^\infty(n+1)\frac{1}{\pi}\left|\int_{\mathbb D}\varphi(z)z^n d\mu(z)\right|^2\geqslant 0,
\end{multline*}
поэтому
$$
\|\mathcal K_{\overline\partial}\,\varphi\|_{L_2(\mathbb D)}\leqslant\|\varphi\|_{L_2(\mathbb D)}
$$
с равенством на функциях $\varphi\in C^2_0(\mathbb D)$, для которых $\int_{\mathbb D}\varphi(z)z^n d\mu(z)=0$ при $n=0, 1, \dots$ Приближая
функции из $L_p(\mathbb D)$ функциями класса $C^2_0(\mathbb D)$, получим ту же самую оценку. Таким образом,
$\|\mathcal K_{\overline\partial}\|_2=1$. Поскольку норма $\|\mathcal K_{\overline\partial}\|_p$ существует при всех $p>1$, то из теоремы
М. Рисса --- Торина \cite[стр. 113]{Ahlfors-66}, согласно которой величина $\log\|\mathcal K_{\overline\partial}\|_p$ является выпуклой функцией
переменного $1/p$, вытекает непрерывность этой величины относительно $p$. Следовательно, $\|\mathcal K_{\overline\partial}\|_p\to 1$ при $p\to 2$.
Предложение доказано.
\end{proof}

\begin{proof}[Доказательство теоремы \ref{dirprob-solv}]
\textbf{Шаг 1.} Установим сначала сходимость ряда \eqref{series-disk} вместе с первыми частными производными в норме пространства $L_p(\mathbb D)$. Из леммы \ref{Pois-Wp-lem} следует, что функция $F_0=\mathcal P[h]$ принадлежит пространству Соболева $W^1_p(\mathbb D)$ при $p<(2(1-\alpha))^{-1}$. Предположим, что $F_{n-1}\in L_p(\mathbb D)$, $p>2$, при некотором номере $n$. Тогда
$$
\partial F_n=\mathcal K_\partial[(\overline{\omega'}/\omega')(\alpha_0\partial F_{n-1}+\beta_0\partial\overline{F_{n-1}})],\qquad
\overline\partial F_n=\mathcal K_{\overline\partial}[(\overline{\omega'}/\omega')(\alpha_0\partial F_{n-1}+\beta_0\partial\overline{F_{n-1}})]
$$
в смысле распределений (см. \cite[стр. 90]{Ahlfors-66}). Используя эти соотношения и применяя предложение \ref{Lp-oper-pro}, а также тот факт, что
$|\alpha_0|+|\beta_0|=1$, выводим из формулы \eqref{induct-oper} оценки
$$
\begin{aligned}
&\|\partial F_n\|_{L_p(\mathbb D)}
\leqslant\|\mathcal K_\partial\|_p\max\{\|\partial F_{n-1}\|_{L_p(\mathbb D)}, \|\overline\partial F_{n-1}\|_{L_p(\mathbb D)}\},\\
&\|\overline\partial F_n\|_{L_p(\mathbb D)}
\leqslant\|\mathcal K_{\overline\partial}\|_p\max\{\|\partial F_{n-1}\|_{L_p(\mathbb D)}, \|\overline\partial F_{n-1}\|_{L_p(\mathbb D)}\},
\end{aligned}
$$
из которых следует
\begin{equation}\label{der-Fn-est}
\|DF_n\|_{L_p(\mathbb D)}\leqslant\|\mathcal{DK}\|_p\cdot\|DF_{n-1}\|_{L_p(\mathbb D)},
\end{equation}
где
\begin{equation*}
\|DF_n\|_{L_p(\mathbb D)}:=\max\{\|\partial F_n\|_{L_p(\mathbb D)}, \|\overline\partial F_n\|_{L_p(\mathbb D)}\},\qquad
\|\mathcal{DK}\|_p:=\max\{\|\mathcal K_\partial\|_p, \|\mathcal K_{\overline\partial}\|_p\}.
\end{equation*}
Тогда отсюда и из \eqref{induct-oper} получаем
\begin{equation}\label{Fn-est}
\|F_n\|_{L_p(\mathbb D)}\leqslant\|\mathcal K\|_p\cdot\|DF_{n-1}\|_{L_p(\mathbb D)}\leqslant
\|\mathcal K\|_p\cdot\|\mathcal{DK}\|_p^{n-1}\cdot\|DF_0\|_{L_p(\mathbb D)}.
\end{equation}

Оценка \eqref{Fn-est} доказывает сходимость при $\|\mathcal{DK}\|_p\cdot\|T\|<1$ ряда \eqref{series-disk} в норме $L_p(\mathbb D)$ к своей сумме
$F\in L_p(\mathbb D)$, причем
\begin{multline}\label{F-est}
\|F\|_{L_p(\mathbb D)}=\left\|\sum_{n=0}^\infty F_n\|T\|^n\,\right\|_{L_p(\mathbb D)}\leqslant\|F_0\|_{L_p(\mathbb D)}+
\sum_{n=1}^\infty\|F_n\|_{L_p(\mathbb D)}\cdot\|T\|^n\leqslant\\
\leqslant\|F_0\|_{L_p(\mathbb D)}+\sum_{n=1}^\infty\|\mathcal K\|_p\cdot\|\mathcal{DK}\|_p^{n-1}\cdot\|DF_0\|_{L_p(\mathbb D)}\cdot\|T\|^n=\\
=\|F_0\|_{L_p(\mathbb D)}+\frac{\|\mathcal K\|_p\cdot\|T\|}{1-\|\mathcal{DK}\|_p\cdot\|T\|}\|DF_0\|_{L_p(\mathbb D)}.
\end{multline}
Оценка \eqref{der-Fn-est} доказывает, что, кроме того, и первые частные производные ряда \eqref{series-disk} сходятся в той же норме к соответствующим производным функции $F$:
\begin{equation}\label{der-F-est}
\|DF\|_{L_p(\mathbb D)}\leqslant\frac{\|DF_0\|_{L_p(\mathbb D)}}{1-\|\mathcal{DK}\|_p\cdot\|T\|},
\end{equation}
где $\|DF\|_{L_p(\mathbb D)}:=\max\{\|\partial F\|_{L_p(\mathbb D)}$. Полученные оценки \eqref{F-est} и \eqref{der-F-est} означают сходимость ряда \eqref{series-disk} в норме пространства Соболева $W_p^1(\mathbb D)$:
\begin{equation}\label{ser-conv}
\lim\limits_{m\to\infty}\|F-S_m\|_{W_p^1(\mathbb D)}=0.
\end{equation}
По теореме вложения Соболева \cite{Sobolev-88}, $W_p^1(\mathbb D)\subset C(\overline{\mathbb D})$ при $1-2/p>0$, или $p>2$, причем вложение
компактно. Следовательно, поскольку $F\in W_p^1(\mathbb D)$, то $F\in C(\overline{\mathbb D})$ и тогда
$f=F\circ\omega^{-1}\in C(\overline\varOmega)$. В силу компактности вложения, ряд \eqref{series-disk}, а следовательно, и \eqref{series}, сходятся равномерно в $\overline{\mathbb D}$ и $\overline\varOmega$ соответственно.

\textbf{Шаг 2.} Теперь докажем, что функция $f=F\circ\omega^{-1}$ удовлетворяет уравнению $\mathcal Lf=0$ в области $\varOmega$, установив сначала выполнение этого равенства в обобщенном смысле.

Пусть $\phi$ --- произвольная пробная функция из класса $C^2_0(\mathbb D)$ и
$$
\langle g|\phi\rangle:=\int_{\mathbb C} g(z)\phi(z)d\mu
$$
есть действие обобщенной функции $g$ на функцию $\phi$. Имеем
\begin{multline*}
\langle F_n|\partial\overline\partial\phi\rangle=\int_{\mathbb D}\partial\overline\partial\phi(z)d\mu(z)
\int_{\mathbb D}\partial_\zeta G(\zeta, z)\frac{\overline{\omega'(\zeta)}}{\omega'(\zeta)}\partial(T_0 F_{n-1}(\zeta))d\mu(\zeta)=\\
=\int_{\mathbb D}\frac{\overline{\omega'(\zeta)}}{\omega'(\zeta)}\partial(T_0 F_{n-1}(\zeta))d\mu(\zeta)\partial_\zeta
\int_{\mathbb D} G(\zeta, z)\partial\overline\partial\phi(z)d\mu(z)=\\
=\int_{\mathbb D}\frac{\overline{\omega'(\zeta)}}{\omega'(\zeta)}\partial(T_0 F_{n-1}(\zeta))\partial\phi(\zeta)d\mu(\zeta)=
\langle(\overline{\omega'}/\omega')\partial(T_0 F_{n-1})|\partial\phi\rangle.
\end{multline*}
Это означает равенство $\partial\overline\partial F_n=-\partial[(\overline{\omega'}/\omega')\partial(T_0 F_{n-1})]$ обобщенных производных в $\mathbb D$. Из него и из
\eqref{oper-change}, в свою очередь, вытекает следующая цепочка равенств для обобщенных функций:
\begin{multline*}
|\omega'|^2\mathcal Ls_m=\sum_{n=0}^m\left(\partial\overline\partial F_n+\|T\|\partial\left(\frac{\overline{\omega'}}{\omega'}\partial(T_0 F_n)\right)\right)\|T\|^n=\\
=\partial\overline\partial F_0+\sum_{n=1}^m\left(\partial\overline\partial F_n+\partial\left(\frac{\overline{\omega'}}{\omega'}\partial(T_0 F_{n-1})\right)\right)\|T\|^n+
\partial\left(\frac{\overline{\omega'}}{\omega'}\partial(T_0 F_m)\right)\|T\|^{m+1}=\\
=\partial\left(\frac{\overline{\omega'}}{\omega'}\partial(T_0 F_m)\right)\|T\|^{m+1},
\end{multline*}
т.е. для любой функции $\varphi\in C_0^2(\varOmega)$, используя функцию $\phi:=\varphi\circ\omega\in C_0^2(\mathbb D)$, можно записать
\begin{equation*}
\langle\mathcal L s_m\,|\,\varphi\rangle=\langle\partial\left[(\overline{\omega'}/\omega')\partial(T_0 F_m)\right]\,|\,\phi\rangle\cdot\|T\|^{m+1}=
-\langle\partial(T_0 F_m)\,|\,(\overline{\omega'}/\omega')\partial\phi\rangle\cdot\|T\|^{m+1}.
\end{equation*}
Но тогда
\begin{multline*}
\langle\mathcal Lf\,|\,\varphi\rangle:=\langle f|\mathcal L\varphi\rangle=\langle f-s_m|\mathcal L\varphi\rangle+\langle s_m|\mathcal L\varphi\rangle=\\
=\langle F-S_m|\mathcal M\phi\rangle-\langle\partial(T_0 F_m)|(\overline{\omega'}/\omega')\partial\phi\rangle\cdot\|T\|^{m+1}.
\end{multline*}
Положим $1/p+1/q=1$. Применяя неравенство Гельдера и принимая во внимание соотношения \eqref{der-Fn-est} и \eqref{ser-conv}, получаем
\begin{multline*}
\left|\langle\mathcal Lf\,|\,\varphi\rangle\right|\leqslant \|F-S_m\|_{L_p(\mathbb D)}\cdot\|\mathcal M\phi\|_{L_q(\mathbb D)}+
\|DF_m\|_{L_p(\mathbb D)}\cdot\|\partial\phi\|_{L_q(\mathbb D)}\cdot\|T\|^{m+1}\leqslant\\
\leqslant\|F-S_m\|_{L_p(\mathbb D)}\cdot\|\mathcal M\phi\|_{L_q(\mathbb D)}+\|DF_0\|_{L_p(\mathbb D)}\cdot
\|\partial\phi\|_{L_q(\mathbb D)}\cdot\|\mathcal{DK}\|_p^m\cdot\|T\|^{m+1}\to 0
\end{multline*}
при $m\to\infty$ и $\|\mathcal{DK}\|_p\cdot\|T\|<1$. Таким образом,$\langle\mathcal Lf\,|\,\varphi\rangle=0$, т.е. функция $f$ удовлетворяет
уравнению $\mathcal Lf=0$ в $\varOmega$ в обобщенном смысле. В силу эллиптичности этого уравнения, оно, согласно лемме Вейля, выполняется и в классическом смысле.

\textbf{Шаг 3.} Остается показать, что $f|_\Gamma=h$. Из оценок \eqref{der-Fn-est} и \eqref{Fn-est} следует, что $F_n\in W_p^1(\mathbb D)$.
По теореме вложения Соболева, при $p>2$ отсюда вытекает, что $F_n\in C(\overline{\mathbb D})$. Так как функция $F_0$ является гармоническим продолжением граничной функции $H\in C^\alpha(\mathbb T)$, то $F_0|_{\mathbb T}=H$. Остальные функции $F_n$, вычисляемые по второй формуле из \eqref{induct-oper}, обращаются в ноль на $\mathbb T$: это можно показать, приблизив функцию
$(\overline{\omega'}/\omega')\partial(T_0 F_{n-1})\in L_p(\mathbb D)$, $p\in(1, \infty)$, при $n\geqslant 1$ финитными функциями из
$C^2_0(\mathbb D)$ и применив оценку \eqref{K-vanish}.

Таким образом, $S_m|_\mathbb T=H$ при любых $m$. Из компактности вложения $W_p^1(\mathbb D)\subset C(\overline{\mathbb D})$ и из сходимости \eqref{ser-conv} вытекает равномерная сходимость $\|F-S_m\|_{C(\overline{\mathbb D})}\to 0$, так что $F|_\mathbb T=S_m|_\mathbb T=H$. Следовательно, $f|_\Gamma=h$.

Все приведенные рассуждения справедливы при выполнении неравенств $2<p<(2(1-\alpha))^{-1}$, которые совместимы, в виду того, что принято $\alpha\in(1/2, 1)$. Поскольку $\|\mathcal{DK}\|_p\to 1$ при $p\to 2$, то для любого значения $\|T\|<1$ можно подобрать такое достаточно близкое к $2$ значение $p$, при котором $\|\mathcal{DK}\|_p\cdot\|T\|<1$. Теорема доказана.
\end{proof}

\bigskip
\noindent Астамур Олегович Багапш,\\
ФИЦ ИУ РАН,\\
ул. Вавилова, д. 44, корп. 2,\\
11933 Москва, Россия\\

\noindent Санкт-Петербургский государственный университет,\\
14 линия В.О., д. 29б,\\
199178, Санкт-Петербург, Россия\\
E-mail: a.bagapsh@gmail.com

\bigskip
\noindent Astamur Olegovich Bagapsh  ,\\
Federal Research Center ''Computer Science and Control'',\\
Vavilova str., 44, bld. 2,\\
11933 Moscow, Russia\\

\noindent Saint-Petersburg State University,\\
14 Line Vasilievskiy island, 29b,\\
199178, Saint-Petersburg, Russia\\
E-mail: a.bagapsh@gmail.com

\end{document}